\newtheorem{exer}{\sffamily\bfseries Ejercicio}
\newtheorem{teo}{Theorem}
\newtheorem{cor}[teo]{Corollary}
\newtheorem{prop}[teo]{Proposition}
\newtheorem{rem}[teo]{Remark}
\numberwithin{equation}{section}
\newcommand{\kah}{\mathcal{K}(\mathcal{H})^{ah}}
\newcommand{\kh}{\mathcal{K}(\mathcal{H})^{h}}
\newcommand{\bh}{\mathcal{B}(\mathcal{H})^h}
\newcommand{\oa}{\mathcal{O}_A}
\newcommand{\aaa}{\mathcal{A}}
\newcommand{\bb}{\mathcal{B}}
\newcommand{\kk}{\mathcal{K}}
\newcommand{\D}{\mathcal D}
\newcommand{\N}{\mathbb N}
\newcommand{\C}{\mathbb C}
\newcommand{\R}{\mathbb R}
\newcommand{\bit}{\begin{itemize}}
\newcommand{\eit}{\end{itemize}}
\newcommand{\be}{\begin{enumerate}}
\newcommand{\ee}{\end{enumerate}}
\newcommand{\bx}[1]{\begin{exer}\rm{#1}}
\newcommand{\ex}{\end{exer}}
\newcommand{\ba}{\begin{array}}
\newcommand{\ea}{\end{array}}
\newcommand{\bc}{\begin{center}}
\newcommand{\ec}{\end{center}}
\newcommand{\g}{\gamma}
\newcommand{\hh}{\mathcal{H}}
\newcommand{\bq}{\begin{equation}}
\newcommand{\eq}{\end{equation}}
\begin{document}

\title{\vspace*{0cm}BEST APPROXIMATION BY DIAGONAL COMPACT OPERATORS\footnote{2010 MSC: Primary: 47A58, 47B10, 47B15. Secondary: 47A55, 47C15, 47B07.}
}
\date{\today}
\author{Tamara Bottazzi $^1$ and Alejandro Varela$^{1,2}$}

\address{$^1$ Instituto Argentino
de Matemática ``Alberto P. Calder\'on'', Saavedra 15 3º piso,
(C1083ACA) Ciudad Aut\'onoma de Buenos Aires, Argentina} 

\address{$^2$ Instituto de Ciencias, Universidad Nacional de General Sarmiento, J.
M. Gutierrez 1150, (B1613GSX) Los Polvorines, Pcia. de Buenos Aires, Argentina} 
\email{tpbottaz@ungs.edu.ar, avarela@ungs.edu.ar}

\keywords{Minimal compact operator, Diagonal operators, Quotient operator norm, Best approximation.}

\begin{abstract} We study the existence and characterization properties of compact Hermitian operators $C$ on a separable Hilbert space $\hh$ such that
$$
\left\|C\right\|\leq \left\|C+D\right\|, \text{ for all } D\in\D(\kh)
$$
\text{ or equivalently }
$$
\left\|C\right\|=\min_{D\in\D(\kh)}\left\|C+D\right\|=\text{dist}\left(C,\D(\kh)\right)
$$
where $\D(\kh)$ denotes the space of compact real diagonal operators in a fixed base of $\hh$ and $\left\|.\right\|$ is the operator norm. We also exhibit a positive trace class operator that fails to attain the minimum in a compact diagonal.
\end{abstract}

\maketitle

\section{Introduction} \label{intro}
Let $\hh$ be a separable Hilbert space, $\kk(\hh)$ be the algebra of compact operators and $\D(\kh)$, the $C^*$ subalgebra of real diagonal compact operators (with the canonical base or any other fixed base). In this paper we study the existence and describe Hermitian compact operators $C$ such that
$$\left\|C\right\|\leq \left\|C+D\right\|,\  \rm{for\ all}\ D\in \D(\kh),$$
or equivalently
$$\left\|C\right\|={\rm dist}(C,\D(\kh)).$$
Where $\left\|.\right\|$ denotes the usual operator norm. These operators $C$ will be called minimal. Our interest in them comes from the study of minimal length curves of the orbit manifold of a self-adjoint compact operator $A$ by a particular unitary group (see \cite{andruchow larotonda}), that is
$$
\mathcal{O}_A=\{uAu^*: u \text{ unitary in } B(H) \text{ and } (u-1)\in\mathcal{K}(H)\}.
$$ 
The tangent space for any $b\in\mathcal{O}_A$ is 
$$T_b(\oa)=\{zb-bz: z \in\kah \}.$$
Where the suffix $ah$ refers to the anti-Hermitian operators (analogously, the suffix $h$ refers to Hermitian operators). If $x\in T_b(\oa)$, the existence of a (not necessarily unique) minimal element $z_0$ such that
$$\left\|x\right\|=\|z_0\|=\inf \left\{\left\|z\right\|:\ z \in \kah, \ zb-bz=x \right\}$$
allows the description of minimal length curves of the manifold by the parametrization
$$\gamma(t)=e^{tz_0}\  b\  e^{-tz_0} ,\ t\in[-1,1].$$
These $z_0$ can be described as $i(C+D)$, with $C\in\kh$ and $D$ a real diagonal operator in the orthonormal base of eigenvectors of $A$.

If we consider a von Neumann algebra $\mathcal{A}$ and a von Neumann subalgebra, named $\mathcal{B}$, of $\mathcal{A}$, it has been proved in \cite{dmr1} that for each $a\in\mathcal{A}$ there always exists a minimal element $b_0$ in $\mathcal{B}$. It means that $\left\|a+b_0\right\|\leq \left\|a+b\right\|$, for all $b\in \mathcal{B}$. For example, if $M_n^h(\C)$ is the algebra of Hermitian matrices of $n\times n$ and $\D(M_n^h(\C))$ is the subalgebra of diagonal Hermitian matrices (or diagonal real matrices), it is easy to prove that, for every $M\in M_n^h(\C)$ there always exists a minimal element $D\in \D(M_n^h(\C))$.

However, in the case of $\kh$, which is only a $C^*$-algebra, the existence of a best approximant in the general case is not guaranteed. In the particular case that $C\in \kh$ has finite rank, it was proved in Proposition 5.1 in \cite{andruchow larotonda} that there exists a minimal compact diagonal element. 

The results we present in this paper are divided in two parts. In the first one we describe a particular case of minimal operators that allow us to prove there is not always a minimal diagonal compact operator. In the second part we present properties and characterizations of minimal compact operators in general. 

\section{Preliminaries and notation} \label{preliminares}

Let $(\hh,\left\langle ,\right\rangle)$ be a separable Hilbert space with the norm $\left\|x\right\|=\left\langle x,x\right\rangle^{1/2}$, for each $x\in \hh$. We denote with $\kk(\hh)$, the two-sided closed ideal of compact operators on $\hh$, with $\bb_1(\hh)$, the space of trace class operators, and $\bb(\hh)$ the set of bounded operators.

We denote with $\left\|T\right\|$ the usual operator norm of $T\in \bb(\hh)$ and $\left\|L\right\|_1=\text{tr}(\left|L\right|)=\text{tr}\left[(L^*L)^{1/2}\right]$, the trace norm of $L\in \bb_1(\hh)$. It should cause no confusion the use of the same notation $\left\|.\right\|$ to refer to the operator norm or a norm on $\hh$, it should be clear from the context.

If $\aaa$ is any of the previous sets, we denote with $\D(\aaa)$ the set of diagonal operators, that is
$$\D(\aaa)=\left\{T\in \aaa:\ \left\langle Te_i,e_j\right\rangle=0\ ,\ \text{ for all } i\neq j\right\},$$
where $\left\{e_k\right\}_{k=1}^{\infty}$ is the canonical (or any other fixed) base of $\hh$. We consider an operator $T\in \bb(\hh)$ like an infinite matrix defined for each $i,j\in \N$ as $T_{ij}=\left\langle Te_i,e_j\right\rangle$. In this sense, the $j$th-column and $i$th-row of $T$ are the vectors in $l^2$ given by $c_j(T)=\left(T_{1j},T_{2j},...\right)$ and $f_j(T)=\left(T_{i1},T_{i2},...\right)$, respectively.

Let $L\in \bh$, we denote the positive and negative parts of $L$ as:
$$L^+=\dfrac{\left|L\right|+L}{2}\ \ {\rm and} \ \ L^-=\dfrac{\left|L\right|-L}{2},$$
respectively. 

We use $\sigma(T)$ and $R(T)$ to denote the spectrum and range of $T\in \bh$, respectively. 

We define $\Phi:\bb(\hh)\to \D(\bb(\hh))$, $\Phi(X)= diag (X)$, which essentially takes the main diagonal (i.e the elements of the form $\left\langle Xe_i,e_i\right\rangle_{i\in\N}$) of an operator $X$ and builds a diagonal operator in the canonical base or the chosen fixed base of $\hh$. For a given sequence $\{d_n\}_{n\in\N}$ we denote with Diag$\big( \{d_n\}_{n\in\N}\big)$ the diagonal (infinite) matrix with $\{d_n\}_{n\in\N}$ in its diagonal  and $0$ elsewhere.

We define the space $\kh/\D(\kh)$ with the usual quotient norm 
$$\left\|\left[C\right]\right\|=\inf_{D\in \\\D(\kh)}\ \left\|C+D\right\|=\text{dist}(C,\\\D(\kh))$$
for each class $[C]=\left\{C+D:\ D\in \\\D\left(\kh\right)\right\}$. 

Given an operator $C\in \kh$, if there exists an operator $D_1$ compact and diagonal such that
$$\left\|C+D_1\right\|={\rm dist}\left(C,\D\left(\kh\right)\right),$$
we say that $D_1$ is a best approximant of $C$ in $\D(\kh)$. In other terms, the operator $C+D_1$ verifies the following inequality
$$\left\|C+D_1\right\|\leq \left\|C+D\right\|$$
for all $D\in \D(\kh)$. In this sense, we call $C+D_1$ a \textbf{minimal} operator or similarly we say that $D_1$ is minimal for $C$.

\section{The existence problem of the best approximant}
Some examples of compact Hermitian operators that possess a closest compact diagonal are: i) those constructed with Hermitian square matrices in their main diagonal, ii) tridiagonal operators with zero diagonal, and iii) finite rank compact operators (see \cite{andruchow larotonda} for a proof).

In the rest of this section we study some examples of compact Hermitian operators with a unique best diagonal approximant. Then, we use this example to show an operator which has no best compact diagonal approximant. We use frequently the fact that any bounded operator $T$ can be described uniquely as an infinite matrix with the notation $T_{ij}$ that we introduced in Section \ref{preliminares} using the canonical (or any other fixed) base.

The following statement is about a set of compact symmetric operators ($L=L^t$), which has the following property: every operator has a column (or row) such that every different column (or row) is orthogonal to it (considering $L$ as an infinite matrix). This result has its origins in the finite dimensional result obtained in \cite{kv}.

\begin{teo} \label{caso3}
Let $T\in \kh$ described as an infinite matrix by $\left(T_{ij}\right)_{i,j\in \N}$.
Suppose that $T$ satisfies:
\be
\item $T_{ij}\in\R$ for each $i,j\in \N$,
\item there exists $i_0\in N$ satisfying $T_{i_0 i_0}=0$, with $T_{i_0 n}\neq 0$, for all $n\neq i_0$,
\item if $T^{[i_0]}$ is the operator $T$ with zero in its $i_0$th-column and $i_0$th-row then $$
\left\|c_{i_0}(T)\right\|\geq \left\|T^{[i_0]}\right\|
$$
(where $\left\|c_{i_0}(T)\right\|$ denotes the Hilbert norm of the $i_0$th-column of $T$), and
\item if the $T_{nn}$'s satisfy that, for each $n\in\N$, $n\neq i_0$:
$$
T_{nn}=-\dfrac{\left\langle c_{i_0}(T),c_n(T)\right\rangle}{T_{i_0n}}.
$$
\ee
then $T$ is minimal, that is
$$
\left\|T\right\|=\left\|c_{i_0}(T)\right\|=\inf_{D\in \D(\kh)}\left\|T+D\right\|
$$
and moreover, $D=\text{Diag}((T_{nn})_{n\in\N})$ is the unique bounded minimal diagonal operator for $T$.  
\end{teo}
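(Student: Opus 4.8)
The plan is to proceed in two stages: first establish the lower bound $\mathrm{dist}(T,\D(\kh))\ge \|c_{i_0}(T)\|$, and then exhibit a diagonal $D$ attaining it, which simultaneously gives minimality of $T$ (taking $D=0$ in the class $[T]$ one needs the specific hypotheses (4) to force the diagonal entries of $T$ itself to be the right ones). For the lower bound, I would use the fact that for any diagonal $D\in\D(\kh)$ and any unit vector $v$, $\|T+D\|\ge \|(T+D)v\|$; the natural choice is $v=e_{i_0}$, since condition (2) says $T_{i_0i_0}=0$, so $(T+D)e_{i_0}=Te_{i_0}=c_{i_0}(T)$ regardless of $D$ (the $i_0$th diagonal entry of $T+D$ is $D_{i_0i_0}$, but this contributes to the $e_{i_0}$ coordinate of $(T+D)e_{i_0}$, so one must be slightly careful — actually $(T+D)e_{i_0}$ has $i_0$th coordinate $D_{i_0 i_0}$ and the rest equal to $c_{i_0}(T)$ off-diagonal, so $\|(T+D)e_{i_0}\|^2 = |D_{i_0i_0}|^2 + \|c_{i_0}(T)\|^2 \ge \|c_{i_0}(T)\|^2$). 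Hence $\|T+D\|\ge\|c_{i_0}(T)\|$ for every $D$, giving $\mathrm{dist}(T,\D(\kh))\ge\|c_{i_0}(T)\|$.

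For the upper bound I must show $\|T\|\le \|c_{i_0}(T)\|$ under hypotheses (1)--(4), i.e. that the particular diagonal already sitting inside $T$ is optimal and that $\|T\|$ equals exactly $\|c_{i_0}(T)\|$. Here is where conditions (3) and (4) enter. Write $T = T^{[i_0]} + R$, where $R$ is the ``cross'' operator supported on the $i_0$th row and column (with $R_{i_0i_0}=0$ since $T_{i_0i_0}=0$). Condition (4) is precisely the statement that $\langle c_{i_0}(T),\, c_n(T)\rangle_{\ell^2}$... wait, let me restate: (4) says $T_{nn}T_{i_0 n} = -\langle c_{i_0}(T), c_n(T)\rangle$ for $n\ne i_0$. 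Summing, this is designed so that $c_{i_0}(T)$ is orthogonal to every other column of $T$: indeed $\langle c_{i_0}(T), c_n(T)\rangle + T_{nn}T_{i_0n} = 0$ rearranges to say that adding back the diagonal contribution makes the columns orthogonal to $c_{i_0}(T)$. The key structural consequence is that $c_{i_0}(T)$ spans a reducing-type direction: I would show that the vector $w = \sum_j \overline{T_{i_0 j}} e_j = \overline{c_{i_0}(T)}$ (real, by (1)) is such that $Tw$ is a scalar multiple of $e_{i_0}$, namely $Tw = \|c_{i_0}(T)\|^2 e_{i_0}$, using (4) to compute the $n$th coordinate ($n\ne i_0$) of $Tw$ as $\sum_j T_{nj}T_{i_0j} = \langle c_n(T), c_{i_0}(T)\rangle = -T_{nn}T_{i_0n} + T_{nn}T_{i_0 n}$... this needs the diagonal term isolated correctly, but the upshot is it vanishes. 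So $T$ maps $\mathrm{span}\{e_{i_0}, w\}$ into itself, acting there as an off-diagonal $2\times2$-type block with norm $\|c_{i_0}(T)\|$, and on the orthogonal complement $T$ acts as (a compression of) $T^{[i_0]}$, whose norm is $\le\|c_{i_0}(T)\|$ by hypothesis (3). Combining, $\|T\|=\|c_{i_0}(T)\|$, and since we already showed no diagonal perturbation can do better, $D=0$-relative-to-$T$ is minimal, i.e. $T$ is minimal.

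Finally, for uniqueness of the minimal diagonal: suppose $D'=\mathrm{Diag}((d'_n))$ is any bounded diagonal with $\|T+D'\| = \|c_{i_0}(T)\|$. From the lower-bound computation, $\|(T+D')e_{i_0}\|^2 = |d'_{i_0}|^2 + \|c_{i_0}(T)\|^2 \le \|c_{i_0}(T)\|^2$ forces $d'_{i_0}=0$. Then I would argue that for each $n\ne i_0$, testing against the vector $e_{i_0}\pm$ (suitable multiple of $e_n$) — or more cleanly, re-running the ``$Tw$ is a multiple of $e_{i_0}$'' argument with $T+D'$ in place of $T$ — shows that optimality forces $\langle c_{i_0}(T), c_n(T+D')\rangle = 0$, i.e. $\langle c_{i_0}(T), c_n(T)\rangle + d'_n T_{i_0 n} = 0$, and since $T_{i_0 n}\ne 0$ by (2) this determines $d'_n$ uniquely, recovering exactly the formula in (4). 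Hence $D' = \mathrm{Diag}((T_{nn}))$.

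The main obstacle I anticipate is the upper bound $\|T\|\le\|c_{i_0}(T)\|$: making rigorous the claim that $T$ decomposes (up to the explicit rank-two piece on $\mathrm{span}\{e_{i_0},\overline{c_{i_0}(T)}\}$) into that rank-two block plus a compression of $T^{[i_0]}$ requires care, because $w=\overline{c_{i_0}(T)}$ need not be orthogonal to $e_{i_0}$ a priori (it is, since $w$ has zero $i_0$th coordinate by (2)... actually $T_{i_0 i_0}=0$ so $w_{i_0}=0$, good) but $T^{[i_0]}$ does not exactly leave $w^\perp$ invariant — one gets an inequality via $\|T\| = \sup\{\|Tx\|: \|x\|=1\}$ splitting $x$ along $w$ and $w^\perp$, and the cross terms must be controlled using orthogonality of $c_{i_0}(T)$ to the other columns. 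Establishing that this estimate closes to give exactly $\|c_{i_0}(T)\|$ (rather than something larger) is the crux, and is where hypotheses (3) and (4) must be used in tandem.
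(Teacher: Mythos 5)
Your proposal is correct and follows essentially the same route as the paper: the lower bound via $\|(T+D)e_{i_0}\|\geq\|c_{i_0}(T)\|$, the upper bound via the $T$-invariant subspace $V=\mathrm{span}\{e_{i_0},c_{i_0}(T)\}$ (on which $T$ is a $2\times 2$ off-diagonal block of norm $\|c_{i_0}(T)\|$) together with $Ty=T^{[i_0]}y$ on $V^{\perp}$ and hypothesis (3), and uniqueness by forcing $D'_{i_0i_0}=0$ and then column-orthogonality, which pins down each $d'_n$ since $T_{i_0n}\neq 0$. The step you flag as the crux does close exactly as you hope, because condition (4) makes $V^{\perp}$ genuinely $T$-invariant, so Pythagoras gives $\|Tx\|^2=\|Tw\|^2+\|Ty\|^2$ with no cross terms to control.
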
 

\begin{proof}
Without loss of generality we can suppose that $T$ is a compact operator with real entries and $i_0=1$, therefore it has the matrix form given by
$$T= \begin{pmatrix}
0&T_{12}&T_{13}&T_{14}&\cdots\\
T_{12}&T_{22}&T_{23}&T_{24}&\cdots\\
T_{13}&T_{23}&T_{33}&T_{34}&\cdots\\
T_{14}&T_{24}&T_{34}&T_{44}&\cdots\\
\vdots&\vdots&\vdots&\vdots&\ddots\\
\end{pmatrix}.
$$
The hypothesis in this case are
\bit
\item $i_0=1$ with $T_{1n}\neq 0$, $\forall n\in\N-\left\{1\right\}$.
\item $\left\|c_1(T)\right\|\geq \left\|\underbrace{\begin{pmatrix}
0&0&0&0&\cdots\\
0&T_{22}&T_{23}&T_{24}&\cdots\\
0&T_{23}&T_{33}&T_{34}&\cdots\\
0&T_{24}&T_{34}&T_{44}&\cdots\\
\vdots&\vdots&\vdots&\vdots&\ddots\\
\end{pmatrix}}_{=T^{[1]}}\right\|=\left\|T^{[1]}\right\|$.
\item Each $T_{nn}$ fulfills:
$$T_{nn}=-\dfrac{\left\langle c_1(T),c_n(T)\right\rangle}{T_{1n}}\ \ \ {\rm for\: every}\  n\in \N-\left\{1\right\}.$$ 
\eit

There are some remarks to be made:

\be
\item First note that for every $i\in \N$
$$\left|T_{ii}\right|=\left|\left\langle T^{[1]}e_i,e_i\right\rangle\right|\leq \left\|T^{[1]}e_i\right\|\left\|e_i\right\|\leq \left\|T^{[1]}\right\|\leq \left\|c_1(T)\right\|<\infty$$
namely, $(T_{ii})_{i\in\N}$ is a bounded sequence (each $T_{ii}$ is a diagonal element of $T^{[1]}$ in the canonical or fixed base). 
\item A direct computation proves that $\left\|c_1(T)\right\|$ and  $-\left\|c_1(T)\right\|$ are eigenvalues of $T$ with 
$$v_+=\dfrac{1}{\sqrt{2}\left\|c_1(T)\right\|}\left(\left\|c_1(T)\right\|e_1+c_1(T)\right)\ \ \ \rm{and}\ \ \ v_-=\dfrac{1}{\sqrt{2}\left\|c_1(T)\right\|}\left(\left\|c_1(T)\right\|e_1-c_1(T)\right),$$
which are eigenvectors of $\left\|c_1(T)\right\|$ and $-\left\|c_1(T)\right\|$, respectively. Let us consider the space $V=Gen\left\{v_+,v_-\right\}$:
\bit
\item If $w\in V$, then $\left\|Tw\right\|^2=\left\|c_1(T)\right\|^2\left\|w\right\|^2$.
\item If $y\in V^{\perp}$, then $\left\|Ty\right\|=\left\|T^{[1]}y\right\|\leq \left\|T^{[1]}\right\|\left\|y\right\|$.
\eit

Then, for every $x=w+y\in \hh$, with $w\in V$ and $y\in V^{\perp}$:
$$\left\|T(w+y)\right\|^2=\left\|Tw\right\|^2+\left\|Ty\right\|^2\leq\left\|c_1(T)\right\|^2\left\|w\right\|^2+\left\|T^{[1]}\right\|^2\left\|y_1\right\|^2\leq\left\|c_1(T)\right\|^2\left\|x\right\|^2$$

Therefore,
$$\left\|T\right\|=\left\|c_1(T)\right\|.$$

\item Let $D'\in \D(\kh)$ and define $(\underbrace{T+D'}_{=T'})e_i=T'(e_i)=c_i(T')$ for each $i\in \N$, then the following properties are satisfied:
\bit
\item If $D'_{11}\neq 0$ then
$$\left\|T'(e_1)\right\|^2=\left\|c_1(T')\right\|^2=D'_{11}+\left\|c_1(T)\right\|^2>\left\|c_1(T)\right\|^2=\left\|T\right\|^2\Rightarrow \left\|T'\right\|>\left\|T\right\|.$$
Therefore, we can assume that if $T+D'$ is minimal then $D'_{11}=0$.
\item Now suppose that there exists $i\in \N$, $i>1$, such that $D'$ does not have its $i$th-column orthogonal to the first one, that is:
$$\left\langle T'e_1,T'e_i\right\rangle=\left\langle c_1(T'),c_i(T')\right\rangle=a\neq 0.$$
Then,
$$T'\left(\dfrac{c_1(T)}{\left\|c_1(T)\right\|}\right)=\left(\left\|c_1(T)\right\|,\dfrac{a_2}{\left\|c_1(T)\right\|},\ldots,\dfrac{a_i}{\left\|c_1(T)\right\|},\ldots\right)\Rightarrow\left\|T'(c_1(T))\right\|^2>\left\|c_1(T)\right\|^2=\left\|T\right\|.$$
Hence, $\left\|T'\right\|>\left\|T\right\|$.
\eit
Therefore, $D=\text{Diag}((T_{nn})_{n\in\N})$ is the unique minimal diagonal for $T$ and it is bounded. 
\ee

\end{proof}

Note that the minimal diagonal obtained in Theorem \ref{caso3} is clearly bounded but we do not know if it is compact. An interesting question is if there exist an operator $T$ which fulfills the hypothesis of Theorem \ref{caso3} and it has an only minimal bounded diagonal non compact. To answer this question we analyzed several examples, we show the most relevant among them.

Let $\g\in \R$ be such that $\left|\g\right|<1$ and take an operator $T\in \bh$ defined as $(T_{ij})_{i,j\in\N}$ where
$$T_{ij}=\left\{
\begin{array}{c l r l}
0& \mbox{ if } & i=j\\
\g^{\max\left\{i,j\right\}-2}& \mbox{ if } &i\neq j&\mbox{and \ } j,i\neq 1\\
\g^{\left|i-j\right|}& \mbox{ if } &j=1&\mbox{or  \ }i=1\\
\end{array}
\right.$$
Writing $T$ as an infinite matrix
$$T=\begin{pmatrix}
0&\g&\g^2&\g^3&\g^4&\cdots\\
\g&0&\g&\g^2&\g^3&\cdots\\
\g^2&\g&0&\g^2&\g^3&\cdots\\
\g^3&\g^2&\g^2&0&\g^3&\cdots\\
\g^4&\g^3&\g^3&\g^3&0&\cdots\\
\vdots&\vdots&\vdots&\vdots&\vdots&\ddots\\

\end{pmatrix}.
$$
$T$ is symmetric and $c_n(T)$ is the the $n$th-column. Then, direct calculations show that 
$$\text{tr}(T^*T)=\text{tr}(T^2)=\sum_{n=1}^{\infty}(T^*T)_{nn}=\sum_{n=1}^{\infty}\left\langle c_n(T),c_n(T)\right\rangle=\dfrac{-1+4 \g^2 + 2 \g^4-4 \g^6+\g^8}{\g^2 (-1 +\g^2)^2}<\infty .$$
Then, $T$ is a Hilbert-Schmidt operator. Consider a diagonal operator $D$, given by $D=\text{Diag}\left((d_n)_{n\in\N}\right)$, with the sequence $(d_n)_{n\in\N}\subset \R$ such that
\be
\item $d_{1}=0$.
\item $\left\langle c_1(T),c_n(T+D)\right\rangle=0$, for every $n\in \N$, $n>1$.
\ee

Indeed, for every $n>3$ each $d_n$ is uniquely determined by
$$
d_n=-\dfrac{\g^2 -\g^n}{(1-\g) \g^2}+\dfrac{\g^n}{-1+\g^2}.
$$
We can also note that $d_n\to \dfrac{1}{\g-1}$ when $n\to \infty$, so the diagonal operator $D=\text{Diag}\left((d_n)_{n\in\N}\right)$ is bounded but non compact.

On the other hand, if we consider $T^{[1]}$, the operator given by

$$T^{[1]}=\begin{pmatrix}
0&0&0&0&0&\cdots\\
0&0&\g&\g^2&\g^3&\cdots\\
0&\g&0&\g^2&\g^3&\cdots\\
0&\g^2&\g^2&0&\g^3&\cdots\\
0&\g^3&\g^3&\g^3&0&\cdots\\
\vdots&\vdots&\vdots&\vdots&\vdots&\ddots\\

\end{pmatrix},$$

then $T^{[1]}$ is also a Hilbert-Schmidt operator. Then $T^{[1]}+D\in \bb(\hh)$. Now consider the operator $T_r$, given by
\begin{equation}\label{contraejemplo}
T_r=\begin{pmatrix}
0&r\g&r\g^2&r\g^3&r\g^4&\cdots\\
r\g&0&\g&\g^2&\g^3&\cdots\\
r\g^2&\g&0&\g^2&\g^3&\cdots\\
r\g^3&\g^2&\g^2&0&\g^3&\cdots\\
r\g^4&\g^3&\g^3&\g^3&0&\cdots\\
\vdots&\vdots&\vdots&\vdots&\vdots&\ddots\\
\end{pmatrix} 
\end{equation}
with $r=\dfrac{\left\|T^{[1]}+D\right\|}{\left\|c_1(T)\right\|}$. Then, we claim that the following operator
$$T_r+D=\begin{pmatrix}
0&r\g&r\g^2&r\g^3&r\g^4&\cdots\\
r\g&T_{22}&\g&\g^2&\g^3&\cdots\\
r\g^2&\g&T_{33}&\g^2&\g^3&\cdots\\
r\g^3&\g^2&\g^2&T_{44}&\g^3&\cdots\\
r\g^4&\g^3&\g^3&\g^3&d_{55}&\cdots\\
\vdots&\vdots&\vdots&\vdots&\vdots&\ddots\\

\end{pmatrix}
$$
is minimal and unique, which means:
$$\left\|\left[T_r\right]\right\|=\inf_{D'\in \D(\bb_h(H))}\ \left\|T+D'\right\|=\inf_{D'\in \D(\kh)}\ \left\|T+D'\right\|=\left\|T_r+D\right\|$$
This is true because $T_r$ is an operator which clearly satisfies the hypothesis of Theorem \ref{caso3}. It follows from the non-compacity of $D$ that there is no best compact diagonal approximation of $T_r$.

The operator $T_r$ is also a positive trace class operator. In effect, if we consider the lower triangular operator $C_a\in \bb(\hh)$, given by $(C_a)_{ij}=a^i$, for $i\geq j$, and take $a=\sqrt{\g}$, then
$$C_{\sqrt{\g}}^*C_{\sqrt{\g}}=\dfrac{1}{1-\g}
\begin{pmatrix}
\g&\g^2&\g^3&\g^4&\g^5&\cdots\\
\g^2&\g^2&\g^3&\g^4&\g^5&\cdots\\
\g^3&\g^3&\g^3&\g^4&\g^5&\cdots\\
\g^4&\g^4&\g^4&\g^4&\g^5&\cdots\\
\vdots&\vdots&\vdots&\vdots&\vdots&\ddots\\

\end{pmatrix}=\dfrac{1}{1-\g}\ Q.$$

Therefore, 
$$\text{tr}(\left|Q\right|)=(1-\g)\ \text{tr}\left(\left|C_{\sqrt{\g}}^*C_{\sqrt{\g}}\right|\right)=(1-\g)\ \text{tr}\left(C_{\sqrt{\g}}^*C_{\sqrt{\g}}\right)=\text{tr}(Q),$$

which shows that $Q\in \bb_1(\hh)$. On the other hand, the operator
$$R=\begin{pmatrix}
0&r\g&r\g^2&r\g^3&\cdots\\
r\g&0&0&0&\cdots\\
r\g^2&0&0&0&\cdots\\
r\g^3&0&0&0&\cdots\\
\vdots&\vdots&\vdots&\vdots&\ddots\\
\end{pmatrix}$$

has finite rank, thus $\begin{pmatrix}
0&\cdots\\
\vdots&Q
\end{pmatrix}+R\in \bb_1(\hh)$. But also $\begin{pmatrix}
0&\cdots\\
\vdots&Q
\end{pmatrix}+R-\text{diag}(Q)=T_r$, which is equivalent to say that $\begin{pmatrix}
0&\cdots\\
\vdots&Q
\end{pmatrix}+R$ is in the same class that $T_r$. As $\text{diag}(Q)\in \bb_1(\hh)$, it follows that $T_r\in \bb_1(\hh)$. Moreover, since $Q$ and $R$ are positive then $T_r$ is also positive.

\begin{rem}[About the implications of the uniqueness condition on the existence of minimal diagonal operators]
For a given Hermitian compact operator $C$ the existence of a unique bounded real diagonal operator $D_0$ minimal for $C$ does not imply that $D_0$ is not compact. On the other hand, if there exist infinite bounded real diagonal operators that are minimal for $C$, this does not imply that there exists a compact minimal diagonal.

The next examples of operators show that the existence of a unique (respectively non unique) minimal diagonal does not necessarily imply that there does not exist (respectively that there exists) a minimal compact diagonal.  
\be
\item Let $L\in \D(\kh)$, $L\neq 0$, then $-L$ is the only minimal diagonal compact operator. In this case, we can observe that there is uniqueness for the minimal, but the best approximant is also compact.
\item Let us consider the example $T_r$ defined in (\ref{contraejemplo}) and the block operator $S=\begin{pmatrix}
S_n&0\\
0&T_r
\end{pmatrix}$, where $S_n\in M_n^h(\C)$ is a matrix whose quotient norm is $\left\|[T_r]\right\|$ and has infinite minimal diagonals of $n\times n$ (consider matrices like those in \cite{ammrv2}, \cite{ammrv3} or \cite{kv}). Then, all minimal diagonal bounded operators for $S$ are of the form $D'=\begin{pmatrix}
D_n&0\\
0&D
\end{pmatrix}$, with any of the infinite $D_n$ minimals for $S_n$ and $D$ the unique minimal bounded diagonal operator for $T_r$. Thus, none of these $D'$ is compact. This case shows that if uniqueness of a minimal diagonal does not hold this does not necessarily imply the existence of a minimal compact diagonal operator.
\ee
\end{rem}

\section{A characterization of minimal compact operators}

In the previous section we showed an example of a compact operator $T_r$ that has no compact diagonal best approximant. The main property that allowed us to prove the non existence of a minimal compact diagonal is the uniqueness of the best approximant for $T_r$.  

Nevertheless, there are a lot of compact operators which have at least one best compact diagonal approximation, for example the operators of finite rank. The spirit of this part follows the main ideas in \cite{varela}. The main purpose of this subsection is to study properties and equivalences that characterize minimal compact operators.

The next two Propositions are closely related with the Hahn-Banach theorem for Banach spaces and they relate the space $\kh$ with $\bb_1(\hh)^h$.

\begin{prop}\label{minimo Y0}
Let $C\in \kh$ and consider the set
$$\mathcal{N}=\left\{Y\in \bb_1(\hh)^h:\ \left\|Y\right\|_1=1,\ \text{tr}(YD)=0\ , \forall \ D\in \D(\kh)\right\}.$$
Then, there exists $Y_0\in \mathcal{N}$ such that
\begin{equation}
\left\|[C]\right\|=\inf_{D\in \D(\kh)}\ \left\|C+D\right\|=\text{tr}(Y_0C).
\end{equation}

\end{prop}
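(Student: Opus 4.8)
The plan is to obtain the identity from the classical trace duality $\kk(\hh)^{*}\cong\bb_1(\hh)$ together with the Hahn--Banach theorem, which is what lets us replace an infimum of norms by a value of a single fixed functional. First I would recall that the bilinear pairing $\langle T,Y\rangle=\text{tr}(TY)$ identifies $\bb_1(\hh)$ isometrically with $\kk(\hh)^{*}$, and, after restricting to Hermitian operators, identifies $\bb_1(\hh)^{h}$ isometrically with the dual of the \emph{real} Banach space $\kh$ (the pairing is real-valued there since $\text{tr}(TY)=\overline{\text{tr}(YT)}=\overline{\text{tr}(TY)}$ when $T=T^{*}$ and $Y=Y^{*}$). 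Under this identification, the set obtained from $\mathcal{N}$ by dropping the normalization $\|Y\|_{1}=1$ is exactly the annihilator $\D(\kh)^{\perp}=\{\varphi\in(\kh)^{*}:\varphi|_{\D(\kh)}=0\}$, because $\D(\kh)$ is a norm-closed subspace of $\kh$.

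Next I would invoke the standard isometric identification $(\kh/\D(\kh))^{*}\cong\D(\kh)^{\perp}$ valid for quotients by closed subspaces, under which the functional corresponding to $Y\in\D(\kh)^{\perp}$ has dual norm $\|Y\|_{1}$ and takes the value $\text{tr}(YC)$ at $[C]$ (a well-defined number since $\text{tr}(YD)=0$ for $D\in\D(\kh)$). This yields
$$
\|[C]\|=\text{dist}(C,\D(\kh))=\sup\big\{\,\text{tr}(YC):\ Y\in\bb_1(\hh)^{h},\ Y\in\D(\kh)^{\perp},\ \|Y\|_{1}\le 1\,\big\}.
$$
It remains to see that the supremum is attained. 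The admissible set $\{\,Y\in\bb_1(\hh)^{h}:\ \|Y\|_{1}\le 1,\ \text{tr}(YD)=0\ \forall\,D\in\D(\kh)\,\}$ is the intersection of the unit ball of $\kk(\hh)^{*}$, which is weak-$*$ compact by Banach--Alaoglu, with the weak-$*$ closed sets $\{Y:Y=Y^{*}\}$ and $\bigcap_{D\in\D(\kh)}\{Y:\text{tr}(YD)=0\}$ (each $D$ being compact), hence it is weak-$*$ compact; moreover $Y\mapsto\text{tr}(YC)$ is weak-$*$ continuous precisely because $C\in\kk(\hh)$. Therefore the supremum is attained at some $Y_{0}$ in this set, with $\text{tr}(Y_{0}C)=\|[C]\|$.

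Finally I would normalize. If $\|[C]\|>0$ then $\|Y_{0}\|_{1}=1$, since otherwise $Y_{0}/\|Y_{0}\|_{1}$ would give a strictly larger value of the (nonnegative) supremum; hence $Y_{0}\in\mathcal{N}$ and the proof is complete. If $\|[C]\|=0$, i.e.\ $C\in\D(\kh)$, then $\text{tr}(YC)=0$ for every $Y\in\D(\kh)^{\perp}$, so it suffices to exhibit any element of $\mathcal{N}$; and $\mathcal{N}$ is nonempty, as the Hermitian trace class operator $Y$ with $Y_{12}=Y_{21}=1/2$ and all other entries $0$ has $\|Y\|_{1}=1$ and vanishing diagonal. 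The only step needing genuine care is the attainment of the supremum, which rests on the weak-$*$ compactness of the admissible set and on the weak-$*$ continuity of $Y\mapsto\text{tr}(YC)$ — the latter being exactly where compactness of $C$ enters; everything else reduces to the two isometric identifications and routine bookkeeping, so I do not anticipate a real obstacle.
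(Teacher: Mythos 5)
Your proof is correct and rests on the same two pillars as the paper's: the isometric trace duality $\kk(\hh)^{*}\cong\bb_1(\hh)$ (restricted to Hermitian parts) and the Hahn--Banach description of $\mathrm{dist}(C,\D(\kh))$ via the annihilator of the closed subspace $\D(\kh)$. The only difference is cosmetic: the paper obtains the extremal functional in one step from the Hahn--Banach norming corollary, whereas you first write $\|[C]\|$ as a supremum over the unit ball of $\D(\kh)^{\perp}$ and then secure attainment via Banach--Alaoglu and the weak-$*$ continuity of $Y\mapsto\mathrm{tr}(YC)$ for compact $C$ --- a slightly longer but equally valid route, and your explicit handling of the degenerate case $\|[C]\|=0$ is a detail the paper glosses over.
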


\begin{proof}

It is an immediate consequence from the Hahn-Banach theorem that since $\D(\kh)$ is a closed subspace of $\kh$ and $C\in \kh$, then there exists a functional $\rho:\kh\to \R$ such that $\left\|\rho\right\|=1$, $\rho(D)=0$, $\forall D\in \D(\kh)$, and
$$
\rho(C)=\inf_{D\in \D(\kh)}\ \left\|C+D\right\|=\text{dist}(C,\D(\kh)).
$$
But, since any functional $\rho$ can be written as $\rho(.)=\text{tr}(Y_0.)$, with $Y_0\in \bb_1(\hh)$, the result follows.
\end{proof}

\begin{prop}[Banach Duality Formula] \label{duality}
Let $C\in \kk(H)$, then
\begin{equation}
\left\|[C]\right\|=\inf_{D\in \D(\kk(H))}\ \left\|C+D\right\|=\max_{Y\in \mathcal{N}}\ \left|\text{tr}(CY)\right|.
\end{equation}
\end{prop}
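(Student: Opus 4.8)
The plan is to derive the Banach Duality Formula from Proposition~\ref{minimo Y0} by establishing the two inequalities separately. The nontrivial direction, $\left\|[C]\right\|\leq \max_{Y\in\mathcal{N}}\left|\text{tr}(CY)\right|$, is already essentially contained in Proposition~\ref{minimo Y0}: the functional $Y_0$ produced there lies in $\mathcal{N}$ and satisfies $\text{tr}(Y_0C)=\left\|[C]\right\|$, so in particular $\left|\text{tr}(Y_0C)\right|=\left\|[C]\right\|\leq \sup_{Y\in\mathcal{N}}\left|\text{tr}(CY)\right|$. This also shows the supremum is attained, which justifies writing $\max$ instead of $\sup$.

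For the reverse inequality, $\left|\text{tr}(CY)\right|\leq \left\|[C]\right\|$ for every $Y\in\mathcal{N}$, I would argue as follows. Fix $Y\in\mathcal{N}$. For any $D\in\D(\kh)$ we have $\text{tr}(YD)=0$, hence $\text{tr}(CY)=\text{tr}((C+D)Y)$. Now estimate
$$
\left|\text{tr}(CY)\right|=\left|\text{tr}((C+D)Y)\right|\leq \left\|C+D\right\|\,\left\|Y\right\|_1=\left\|C+D\right\|,
$$
using the standard trace-duality bound $\left|\text{tr}(AB)\right|\leq \left\|A\right\|\,\left\|B\right\|_1$ together with $\left\|Y\right\|_1=1$. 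Since this holds for every $D\in\D(\kh)$, taking the infimum over $D$ gives $\left|\text{tr}(CY)\right|\leq \inf_{D\in\D(\kh)}\left\|C+D\right\|=\left\|[C]\right\|$. Taking the supremum over $Y\in\mathcal{N}$ then yields $\max_{Y\in\mathcal{N}}\left|\text{tr}(CY)\right|\leq \left\|[C]\right\|$, and combined with the previous paragraph the two quantities coincide.

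I do not anticipate a serious obstacle here; this is a routine instance of the duality between a quotient space $X/M$ and the annihilator $M^{\perp}$ in $X^{*}$, specialized to $X=\kh$, whose dual is $\bb_1(\hh)^h$ via the trace pairing, and $M=\D(\kh)$. The only points that deserve a word of care are: (i) confirming that $\D(\kh)$ is indeed a \emph{closed} subspace of $\kh$ so that Hahn--Banach applies cleanly (it is, since $\Phi$ restricted to $\kh$ is a norm-one projection onto it); (ii) making sure that the annihilator of $\D(\kh)$ inside $\bb_1(\hh)^h$ is exactly $\{Y\in\bb_1(\hh)^h:\text{tr}(YD)=0\ \forall D\in\D(\kh)\}$, which is immediate from the definition of $\mathcal{N}$ after normalizing; and (iii) noting that we may restrict the functional to be Hermitian-valued and real, since $C$ and the elements of $\D(\kh)$ are Hermitian, so that the real and complex duality formulations agree. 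With these remarks in place the proof is a few lines.
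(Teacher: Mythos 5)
Your proposal is correct and follows essentially the same route as the paper: both directions rest on Proposition~\ref{minimo Y0} for the attainment of the supremum and on the trace-duality bound $\left|\text{tr}\left(Y(C+D)\right)\right|\leq \left\|C+D\right\|\left\|Y\right\|_1$ applied after replacing $\text{tr}(CY)$ by $\text{tr}\left((C+D)Y\right)$. The paper merely packages the second step through the auxiliary sets $\mathcal{N}_D$ and the norm of the functional $Y\mapsto \text{tr}\left(Y(C+D)\right)$, which is the same estimate in slightly less direct form.
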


\begin{proof}
%
%
%
Let $C\in \kk(\hh)$. By Proposition \ref{minimo Y0}, there exists $Y_0\in \mathcal{N}$ such that
$$\inf_{D\in \D(\kh)}\ \left\|C+D\right\|=\text{tr}(Y_0C).$$
Then 
$$\inf_{D\in \D(\kh)}\ \left\|C+D\right\|=\text{tr}(Y_0C)\leq \max_{Y\in \mathcal{N}}\ \left|\text{tr}(CY)\right|.$$
On the other side, consider for each $D\in \D(\kh)$ the set 
$$\mathcal{N}_D=\left\{Y\in \bb_1(\hh)^h:\ \left\|Y\right\|_1=1,\ \text{tr}(YD)=0\right\},$$
if we fix $D\in \D(\kk(H))$, we have 
$$\sup_{Y\in \mathcal{N}_D}\ \left|\text{tr}(YC)\right|=\sup_{Y\in \mathcal{N}_D}\ \left|\text{tr}\left(Y(C+D)\right)\right|.$$
Take the functional $\varphi: \bb_1(\hh)^h\longrightarrow \R$, defined by $\varphi(Y)=\text{tr}\left(Y(C+D)\right)$. We have that 
$$\left\|\varphi\right\|=\left\|C+D\right\|$$
Therefore,
$$\sup_{Y\in \mathcal{N}_D}\ \left|\text{tr}(YC)\right|=\sup_{Y\in \mathcal{N}_D}\ \left|\text{tr}\left(Y(C+D)\right)\right|\leq \left\|C+D\right\|,$$
for each fixed compact diagonal operator $D$. \\
Then  $\mathcal {N}\subseteq \mathcal{N}_D$ for all $D\in \D(\kh)$. Hence
$$\sup_{Y\in \mathcal{N}}\ \left|\text{tr}(YC)\right|\leq \sup_{Y\in \mathcal{N}_D}\ \left|\text{tr}(YC)\right|\leq \left\|C+D\right\|.$$
\end{proof}

Note that the annihilator of $\D(\kh)$ (i.e, $Y\in \bb_1(\hh)$ such that $\text{tr}(YD)=0$ for every $D\in \D(\kh)$) and the annihilator of $\D(\bh)$ are the same set. The proof of this fact is a direct consequence of the definition and we omit it. Moreover, it is easy to prove that if $Y\in \D(\bh)^{\perp}$, then $\text{Diag}(Y)=0$.

It is trivial that
$$\inf_{D\in \D(\bh)}\ \left\|C+D\right\|\leq \inf_{D\in \D(\kh)}\ \left\|C+D\right\|$$
Observe that there always exists $D_0\in \D(\bh)$ such that $\left\|C+D_0\right\|=\inf_{D\in \D(\bb(H))^h}\ \left\|C+D\right\|$, since $\bb(\hh)$ is a von Neumann algebra and $\D(\bb(\hh))$ is a von Neumann subalgebra of $\bb(\hh)$ (see \cite{dmr1}).

With the above properties we can prove the reverse inequality, as we show in the following proposition. 

\begin{prop} \label{igualdad de infimos}
Let $C\in \kh$, then
$$\inf_{D\in \D(\bh)}\ \left\|C+D\right\|=\inf_{D\in \D(\kh)}\ \left\|C+D\right\|.$$
\end{prop}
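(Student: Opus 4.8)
The plan is to establish the reverse inequality
$$\inf_{D\in \D(\bh)}\ \left\|C+D\right\|\ \geq\ \inf_{D\in \D(\kh)}\ \left\|C+D\right\|$$
by passing through the Banach duality formula of Proposition \ref{duality}. The key observation, already flagged in the paragraph preceding the statement, is that the annihilator sets coincide: if $Y\in \bb_1(\hh)^h$ satisfies $\text{tr}(YD)=0$ for all $D\in \D(\kh)$, then in fact $\text{tr}(YD)=0$ for all $D\in \D(\bh)$, because $\text{tr}(YD)=\sum_n Y_{nn}D_{nn}$ depends only on the diagonal of $Y$, and the condition on compact diagonals (in particular the finite-rank ones $e_n\otimes e_n$) already forces $Y_{nn}=0$ for every $n$, i.e. $\text{Diag}(Y)=0$. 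Hence the set $\mathcal{N}$ appearing in Proposition \ref{duality} is simultaneously the unit sphere of the preannihilator of $\D(\kh)$ and of $\D(\bh)$.

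Given this, I would argue as follows. First, repeat verbatim the computation in the proof of Proposition \ref{duality}, but with $\D(\bh)$ in place of $\D(\kh)$: for each fixed $D\in \D(\bh)$ and each $Y\in \mathcal{N}$ we have $\text{tr}(YC)=\text{tr}(Y(C+D))$ since $\text{tr}(YD)=0$, and therefore
$$\left|\text{tr}(YC)\right|=\left|\text{tr}(Y(C+D))\right|\leq \left\|Y\right\|_1\left\|C+D\right\|=\left\|C+D\right\|.$$
Taking the supremum over $Y\in \mathcal{N}$ and then the infimum over $D\in \D(\bh)$ gives
$$\max_{Y\in \mathcal{N}}\left|\text{tr}(YC)\right|\ \leq\ \inf_{D\in \D(\bh)}\ \left\|C+D\right\|.$$
Combining this with Proposition \ref{duality}, which identifies the left-hand side with $\inf_{D\in \D(\kh)}\left\|C+D\right\|$, yields
$$\inf_{D\in \D(\kh)}\ \left\|C+D\right\|\ \leq\ \inf_{D\in \D(\bh)}\ \left\|C+D\right\|.$$
The opposite inequality is the trivial one already noted in the excerpt (a larger subspace over which to minimize gives a smaller infimum), so the two infima are equal.

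The only genuinely delicate point — and the step I would flag as the main obstacle — is making sure the duality input is legitimately available: Proposition \ref{duality} is stated and proved for $C\in\kk(\hh)$, and what makes the argument work is precisely that $\mathcal{N}\subseteq\bb_1(\hh)^h$ is a subset of the \emph{predual} of $\bb(\hh)$, so that $Y$ pairs by trace against \emph{all} of $\bb(\hh)$, not merely against compacts; this is what lets the same $\mathcal{N}$ serve for the bounded-diagonal problem. I would include a short sentence verifying $\text{tr}(YD)$ is well defined and equals $\sum_n Y_{nn}D_{nn}$ for $Y\in\bb_1(\hh)$ and $D\in\D(\bh)$ bounded (so $YD\in\bb_1(\hh)$), which justifies interchanging the roles of $\D(\kh)$ and $\D(\bh)$ in the estimate above. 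Everything else is a routine repetition of the Hahn--Banach / Banach-duality bookkeeping already carried out in Propositions \ref{minimo Y0} and \ref{duality}.
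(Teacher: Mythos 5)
Your proof is correct and follows essentially the same route as the paper: both arguments hinge on the fact that a trace-class $Y$ annihilating $\D(\kh)$ has zero diagonal and hence annihilates all of $\D(\bh)$, so the trace pairing $\left|\text{tr}(YC)\right|=\left|\text{tr}(Y(C+D))\right|\leq\left\|C+D\right\|$ holds for every bounded diagonal $D$. The only (inessential) difference is that the paper pairs a single optimal $Y_0$ from Proposition \ref{minimo Y0} against the bounded minimizer $C+D_0$ whose existence is guaranteed by \cite{dmr1}, whereas you run the estimate uniformly over $\mathcal{N}$ and all $D\in\D(\bh)$, thereby avoiding any appeal to the attainment of the bounded infimum.
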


\begin{proof}
Let $D_0$ a minimal bounded diagonal operator such that
$$\inf_{D\in \D(\bb(H))^h}\ \left\|C+D\right\|=\left\|C+D_0\right\|.$$

Then, using Proposition \ref{minimo Y0}, there exists $Y_0\in \bb_1(\hh)$ such that
$$
\inf_{D\in \D(\kk(H))^h}\ \left\|C+D\right\|=\left|\text{tr}(Y_0C)\right|=\left|\text{tr}(Y_0(C+D_0))\right|\leq \left\|C+D_0\right\|
$$
which completes the proof.

\end{proof}{}

A natural fact that has been proved for minimal Hermitian matrices is a  balanced spectrum property: if $M\in M_n^h(\C)$ and $M$ is minimal then $\left\|M\right\|$ and $-\left\|M\right\|$ are in the spectrum of M. This property holds for minimal compact operators.

\begin{prop}[Balanced spectrum property] \label{espectro centrado}
Let $C\in \kk(H)^h$, $C\neq 0$. Suppose that there exists $D_1\in \D(\kh)$ such that $C+D_1$ is minimal, then
$$
\pm\|C+D_1\|\in\sigma(C+D_1).
$$
\end{prop}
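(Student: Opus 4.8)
The plan is to assume, for contradiction, that the balanced spectrum property fails. Write $C_1 = C + D_1$, which is compact, Hermitian, and minimal, so $\|C_1\| = \mathrm{dist}(C,\D(\kh))$. Since $C_1$ is compact and self-adjoint, its spectrum consists of $0$ together with a sequence of real eigenvalues accumulating only at $0$, and $\|C_1\|$ equals $\max(\lambda_{\max}, -\lambda_{\min})$ where $\lambda_{\max}, \lambda_{\min}$ are the largest and smallest eigenvalues (each attained). Failure of the property means exactly one of $\|C_1\|, -\|C_1\|$ lies in $\sigma(C_1)$; say $\|C_1\| \in \sigma(C_1)$ but $-\|C_1\| \notin \sigma(C_1)$, so there is $\varepsilon > 0$ with $\sigma(C_1) \subset (-\|C_1\| + \varepsilon, \|C_1\|]$ (the other case is symmetric, replacing $C_1$ by $-C_1$).

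The idea is then to perturb $C_1$ by a small negative multiple of a diagonal projection supported on the top eigenspace, to strictly decrease the norm, contradicting minimality. Concretely, let $P$ be the spectral projection of $C_1$ onto the eigenvalues in $[\|C_1\| - \delta, \|C_1\|]$ for small $\delta$; this $P$ has finite rank since $C_1$ is compact. The obstacle is that $P$ need not be diagonal, so subtracting $tP$ is not an admissible move in $\D(\kh)$. To fix this I would instead use Proposition \ref{duality}: if $C_1$ were minimal with $-\|C_1\| \notin \sigma(C_1)$, pick any $Y \in \mathcal{N}$ attaining $|\mathrm{tr}(C_1 Y)| = \|C_1\|$ (here I use that $\mathrm{tr}(C_1 Y) = \mathrm{tr}((C+D_1)Y) = \mathrm{tr}(CY)$ because $\mathrm{tr}(D_1 Y)=0$). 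Decompose the Hermitian trace-class operator $Y = Y^+ - Y^-$; then $\|C_1\| = |\mathrm{tr}(C_1 Y)| \le \mathrm{tr}(C_1^+ Y^+) + \mathrm{tr}(C_1^- Y^-) \le \|C_1^+\|\,\|Y^+\|_1 + \|C_1^-\|\,\|Y^-\|_1 = \|C_1\|(\|Y^+\|_1 + \|Y^-\|_1) = \|C_1\|$, using $\|Y\|_1 = \|Y^+\|_1 + \|Y^-\|_1 = 1$. Equality forces $\mathrm{tr}(C_1^+ Y^+) = \|C_1^+\|\,\|Y^+\|_1$ and $\mathrm{tr}(C_1^- Y^-) = \|C_1^-\|\,\|Y^-\|_1$, which (by the equality case in the trace Hölder inequality, plus the attainment of the norm by compactness) means $Y^+$ is supported on the eigenspace of $C_1^+$ for eigenvalue $\|C_1^+\|$ and similarly $Y^-$ on the top eigenspace of $C_1^-$, i.e. the eigenspace of $C_1$ for eigenvalue $-\|C_1^-\|$.

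Now I would split into cases on which of $\|Y^+\|_1, \|Y^-\|_1$ is nonzero. If both are nonzero, then $\|C_1^+\| = \|C_1^-\| = \|C_1\|$ forces both $\pm\|C_1\| \in \sigma(C_1)$ — done. If $\|Y^-\|_1 = 0$, then $Y = Y^+ \ge 0$ is supported on the top eigenspace of $C_1$, which must have eigenvalue $\|C_1^+\| = \|C_1\|$, so $\|C_1\| \in \sigma(C_1)$; I still need $-\|C_1\| \in \sigma(C_1)$, and this is where I exploit the remaining freedom. Since $\mathrm{Diag}(Y) = 0$ (noted in the excerpt for annihilators of $\D(\kh)$) and $Y \ge 0$ with $\|Y\|_1 = 1$, the operator $Y$ is a nonzero positive trace-class operator with zero diagonal entries — but a nonzero positive operator has at least one strictly positive diagonal entry $\langle Ye_i,e_i\rangle > 0$, a contradiction. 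Hence the case $\|Y^-\| = 0$ cannot occur, and symmetrically neither can $\|Y^+\| = 0$; so both parts are nonzero and we land in the first case. I expect the main technical obstacle to be the rigorous equality-case analysis of the trace inequality $\mathrm{tr}(C_1 Y) \le \|C_1\|$ and pinning down the supports of $Y^{\pm}$; the positivity-forces-nonzero-diagonal observation is the clean way to rule out the degenerate one-sided cases.
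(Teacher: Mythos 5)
Your proof is correct, but it takes a genuinely different route from the one the paper intends. The paper dismisses this proposition as a ``routine application of functional calculus'', and the intended argument is essentially the perturbative one you started with and then abandoned --- except that the right perturbation is not the spectral projection $P$ but a multiple of the identity. If, say, $-\|C+D_1\|\notin\sigma(C+D_1)$, then $\sigma(C+D_1)\subset(-\|C+D_1\|+\varepsilon,\;\|C+D_1\|]$, so for small $t>0$ the spectral mapping theorem gives $\|C+D_1-tI\|<\|C+D_1\|$; now $tI$ is a bounded real diagonal operator, and Proposition \ref{igualdad de infimos} (proved before this proposition) converts the resulting bound on $\mathrm{dist}(C,\D(\bh))$ into one on $\mathrm{dist}(C,\D(\kh))$, contradicting minimality. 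So the obstacle that made you switch strategies (non-diagonality of $P$) dissolves once $P$ is replaced by $I$. Your duality argument is nonetheless a valid, self-contained alternative and contains no circularity, since Propositions \ref{minimo Y0} and \ref{duality} precede this one: after the harmless normalization $\mathrm{tr}((C+D_1)Y)\geq 0$ (replace $Y$ by $-Y$ if necessary), the chain $\|C+D_1\|=\mathrm{tr}((C+D_1)Y)\leq \mathrm{tr}((C+D_1)^+Y^+)+\mathrm{tr}((C+D_1)^-Y^-)\leq \|(C+D_1)^+\|\,\|Y^+\|_1+\|(C+D_1)^-\|\,\|Y^-\|_1\leq\|C+D_1\|$ is correct, your observation that a nonzero positive trace-class operator cannot have identically zero diagonal (because $\langle Ye_i,e_i\rangle=\|Y^{1/2}e_i\|^2$) correctly rules out $Y^+=0$ or $Y^-=0$, and the equality case then forces $\|(C+D_1)^{\pm}\|=\|C+D_1\|$, which places $\pm\|C+D_1\|$ in the closed set $\sigma(C+D_1)$; you do not even need the finer statement about the supports of $Y^{\pm}$. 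What your approach buys is that it simultaneously produces the dual witness with $E_+X^+=X^+$ and $E_-X^-=X^-$ that reappears in the proof of Theorem \ref{teoprincipal}; what the paper's approach buys is brevity. The only degenerate case worth a word is $C+D_1=0$, where the conclusion is trivial and your equality analysis is vacuous.
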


\begin{proof}
The proof is a routine application of functional calculus to the Hermitian operator $C+D_1$.
%
%
%
%
%
%
%

\end{proof}

\begin{teo} \label{teoprincipal}
Let $C \in \kh$ and $D_1\in \D(\kh)$. Consider $E_+$ and $E_-$, the spectral projections of the eigenvalues $\lambda_{max}(C+D_1)$ and $\lambda_{min}(C+D_1)$, respectively. The following statements are equivalent:
\begin{enumerate}
\item $C+D_1$ is minimal.
\item There exists $X\in \bb_1(\hh)$, $X\neq 0$, such that 
\begin{itemize}
\item $\left\langle Xe_i,e_i\right\rangle=0\ ,\ \forall i\in \mathbb{N};$
\item $\left|\text{tr}(X(C+D_1))\right|=\left\|C+D_1\right\|\left\|X\right\|_1;$
\item $E_+X^+=X^+\ ,\ E_-X^-=X^-.$
\end{itemize}
\item $\lambda_{min}(C+D_1)+\lambda_{max}(C+D_1)=0$ and for each $D\in \D(\kh)$ there exists $y\in R(E_+)\ ,\ z\in R(E_-)$ such that:
\begin{itemize}
\item $\left\|y\right\|=\left\|z\right\|=1;$
\item $\left\langle Dy,y\right\rangle \leq \left\langle Dz,z\right\rangle.$
\end{itemize}
\end{enumerate}\end{teo}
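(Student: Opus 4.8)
The plan is to prove the three equivalences in a cycle $(1)\Rightarrow(2)\Rightarrow(3)\Rightarrow(1)$, using the Banach duality formula (Proposition \ref{duality}) as the bridge between the minimality of $C+D_1$ and the existence of a trace-class witness $X$. Throughout I will write $A=C+D_1$ and $\alpha=\|A\|$; by Proposition \ref{espectro centrado}, if $A$ is minimal then $\pm\alpha\in\sigma(A)$, so $\lambda_{max}(A)=\alpha$ and $\lambda_{min}(A)=-\alpha$, which already gives the first clause of $(3)$ for free. Note that since $A$ is compact and Hermitian, $\pm\alpha$ are genuine eigenvalues when $A\neq 0$, so $E_+=E_+(A)$ and $E_-=E_-(A)$ are (finite-rank) spectral projections onto the $\alpha$- and $(-\alpha)$-eigenspaces.

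For $(1)\Rightarrow(2)$: by Proposition \ref{duality} there is $Y_0\in\mathcal N$ with $|\text{tr}(Y_0 A)|=\|[C]\|=\|A\|$ (using that $D_1$ is minimal, so $\|[C]\|=\|A\|$, and that $\text{tr}(Y_0D_1)=0$ since $Y_0$ annihilates $\D(\kh)$). Set $X=Y_0$ (up to sign); then $\|X\|_1=1$, so $|\text{tr}(XA)|=\|A\|\|X\|_1$, which is the equality case in the trace–operator-norm Hölder inequality $|\text{tr}(XA)|\le\|A\|\|X\|_1$. The standard analysis of this equality case — writing $X=V|X|$ in polar form and diagonalizing $|X|$ — forces the support projection of $X$ to lie inside the spectral subspace of $A$ for the eigenvalues $\pm\alpha$, and more precisely forces $V$ to act as $\pm\mathrm{id}$ there; decomposing into the positive and negative parts $X^\pm$ this says exactly $E_+X^+=X^+$ and $E_-X^-=X^-$. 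The condition $\langle Xe_i,e_i\rangle=0$ for all $i$ is precisely the statement that $\text{Diag}(X)=0$, i.e. $X\in\D(\bh)^\perp$, which holds because $Y_0\in\mathcal N\subseteq\D(\bh)^\perp$ (as remarked after Proposition \ref{duality}). Finally $X\neq0$ since $\|X\|_1=1$ and $\alpha=\|A\|>0$.

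For $(2)\Rightarrow(3)$: from $E_\pm X^\pm=X^\pm$ and $X\ne 0$ we get that at least one of $X^+,X^-$ is nonzero; the equality $|\text{tr}(XA)|=\alpha\|X\|_1$ together with $Ax=\alpha x$ on $R(E_+)$ and $Ax=-\alpha x$ on $R(E_-)$ forces \emph{both} $X^+\neq 0$ and $X^-\neq 0$ and $\|X^+\|_1=\|X^-\|_1=\tfrac12$, hence $R(E_+)\neq\{0\}\neq R(E_-)$ and $\lambda_{min}(A)+\lambda_{max}(A)=-\alpha+\alpha=0$. Now fix $D\in\D(\kh)$. Using $0=\text{tr}(XD)=\text{tr}(X^+D)-\text{tr}(X^-D)$ and normalizing, one finds that the (compact, trace-class) states $\mu_+(\cdot)=2\,\text{tr}(X^+\cdot)$ on $R(E_+)$ and $\mu_-(\cdot)=2\,\text{tr}(X^-\cdot)$ on $R(E_-)$ satisfy $\mu_+(D)=\mu_-(D)$; diagonalizing $X^+$ and $X^-$ and invoking a convexity/extreme-point argument on the finite-dimensional compact convex sets $\{\langle Dy,y\rangle:y\in R(E_+),\|y\|=1\}$ and $\{\langle Dz,z\rangle:z\in R(E_-),\|z\|=1\}$ (these are closed intervals) yields unit vectors $y\in R(E_+)$, $z\in R(E_-)$ with $\langle Dy,y\rangle\le\mu_+(D)=\mu_-(D)\le\langle Dz,z\rangle$ — concretely, take $y$ a unit eigenvector of the compression $E_+DE_+$ for its smallest eigenvalue and $z$ one for the largest eigenvalue of $E_-DE_-$, and check the mean is trapped in between.

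For $(3)\Rightarrow(1)$: suppose $A'=C+D$ is any competitor with $D\in\D(\kh)$; I must show $\|A'\|\ge\alpha$. Write $D=D-D_1$—more precisely, set $\Delta=D-D_1\in\D(\kh)$, so $A'=A+\Delta$. Apply $(3)$ to $\Delta$ to get unit vectors $y\in R(E_+)$, $z\in R(E_-)$ with $\langle\Delta y,y\rangle\le\langle\Delta z,z\rangle$. Then $\langle A'y,y\rangle=\alpha+\langle\Delta y,y\rangle$ and $\langle A'z,z\rangle=-\alpha+\langle\Delta z,z\rangle$, so
\[
\langle A'y,y\rangle-\langle A'z,z\rangle=2\alpha+\bigl(\langle\Delta y,y\rangle-\langle\Delta z,z\rangle\bigr)\ \le\ 2\alpha,
\]
wait—that is the wrong direction; instead use $\langle A'y,y\rangle-\langle A'z,z\rangle=2\alpha+\langle\Delta y,y\rangle-\langle\Delta z,z\rangle$ and note $\langle\Delta y,y\rangle-\langle\Delta z,z\rangle\le 0$ gives $\langle A'y,y\rangle-\langle A'z,z\rangle$ could be less than $2\alpha$; so I should instead bound $\|A'\|\ge\max(\langle A'y,y\rangle,-\langle A'z,z\rangle)\ge\tfrac12(\langle A'y,y\rangle-\langle A'z,z\rangle)$ only when that quantity is $\ge 2\alpha$. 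The correct move: $\|A'\|\ge\langle A'y,y\rangle=\alpha+\langle\Delta y,y\rangle$ and $\|A'\|\ge-\langle A'z,z\rangle=\alpha-\langle\Delta z,z\rangle$; averaging, $\|A'\|\ge\alpha+\tfrac12(\langle\Delta y,y\rangle-\langle\Delta z,z\rangle)$, and since the last term is $\le 0$ this is not yet enough. So instead I separate cases on the sign of $\langle\Delta y,y\rangle$: if $\langle\Delta y,y\rangle\ge 0$ then $\|A'\|\ge\langle A'y,y\rangle\ge\alpha$; if $\langle\Delta y,y\rangle<0$ then $\langle\Delta z,z\rangle\ge\langle\Delta y,y\rangle$ is not directly useful, so instead observe that for a \emph{diagonal} $\Delta$ and any unit $y$ one has $\langle\Delta y,y\rangle=\sum_i \Delta_{ii}|y_i|^2$, and the key point is that $(3)$ must be applied not to $\Delta$ alone but exploited to rule out a uniform sign; the clean argument is that $(3)$ forces, for every $D\in\D(\kh)$, $\min_{y\in R(E_+),\|y\|=1}\langle Dy,y\rangle\le\max_{z\in R(E_-),\|z\|=1}\langle Dz,z\rangle$, equivalently the compressions $E_+DE_+$ and $E_-DE_-$ cannot be strictly separated, from which $\|A+\Delta\|\ge\alpha$ follows by choosing the optimal $y,z$ in the two eigenspaces and using $\langle(A+\Delta)y,y\rangle\ge\alpha-\|E_+\Delta E_+\|\cdot 0$—I will instead give the standard argument: if $\|A+\Delta\|<\alpha$ then both $E_+(A+\Delta)E_+<\alpha E_+$ and $E_-(A+\Delta)E_->-\alpha E_-$ as forms, i.e. $E_+\Delta E_+<0$ and $E_-\Delta E_->0$, so $\max_{z}\langle\Delta z,z\rangle<0<\min_y\langle\Delta y,y\rangle$, contradicting the inequality from $(3)$. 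Hence $\|A+\Delta\|\ge\alpha$ for all $\Delta\in\D(\kh)$, i.e. $A=C+D_1$ is minimal.

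The main obstacle is the equality-case analysis in $(1)\Rightarrow(2)$ and the convexity argument in $(2)\Rightarrow(3)$: one must handle the fact that $|X|$ may have infinitely many eigenvalues and carefully justify that the support of $X$ is squeezed into the finite-dimensional top/bottom eigenspaces of the compact operator $A$, and that the normalized traces restricted to $R(E_\pm)$ behave like states whose values on $D$ are realized by actual unit vectors via the spectral theorem for the finite-rank compressions $E_\pm D E_\pm$. The $(3)\Rightarrow(1)$ direction is then a short form-positivity argument as sketched above, and $\pm\|C+D_1\|\in\sigma(C+D_1)$ is already supplied by Proposition \ref{espectro centrado}.
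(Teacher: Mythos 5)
Your overall architecture is sound and, for $(1)\Leftrightarrow(2)$ and $(2)\Rightarrow(3)$, essentially matches the paper's: the duality formula of Proposition \ref{duality} produces the witness $X$, the equality case $|\text{tr}(XA)|=\|A\|\,\|X\|_1$ squeezes the supports of $X^\pm$ into the finite-dimensional eigenspaces $R(E_\pm)$ (the paper carries this out via a $3\times 3$ block decomposition and the trace-norm inequalities of Simon's Theorem 1.19, which you should not wave away as ``standard'' since it is the technical heart of the proof), and the identity $\text{tr}(X^+D)=\text{tr}(X^-D)$ coming from $\text{diag}(X)=0$ traps the common value between the extreme eigenvalues of the compressions $E_\pm DE_\pm$. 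Where you genuinely diverge is the last leg: the paper proves $(3)\Rightarrow(2)$ by a separating-hyperplane argument on the compact convex sets $\Phi(\mathcal A)$, $\Phi(\mathcal B)\subset l^2(\R)$, whereas you prove $(3)\Rightarrow(1)$ directly. That direct route is legitimately simpler and avoids the convexity machinery.

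However, your execution of $(3)\Rightarrow(1)$ contains a direction error that makes the claimed contradiction evaporate. With $\Delta=D-D_1$ and $\|A+\Delta\|<\alpha$, your (correct) deductions $E_+\Delta E_+<0$ and $E_-\Delta E_->0$ say that $\langle \Delta y,y\rangle<0$ for every unit $y\in R(E_+)$ and $\langle \Delta z,z\rangle>0$ for every unit $z\in R(E_-)$; i.e.\ $\max_{y}\langle\Delta y,y\rangle<0<\min_{z}\langle\Delta z,z\rangle$, not the reversed inequality you wrote. This is perfectly \emph{consistent} with condition $(3)$ applied to $D=\Delta$, which only demands some pair with $\langle\Delta y,y\rangle\le\langle\Delta z,z\rangle$. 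The fix is one line: apply $(3)$ to $-\Delta=D_1-D\in\D(\kh)$ instead, obtaining unit vectors with $\langle\Delta y,y\rangle\ge\langle\Delta z,z\rangle$; then
$$\|A+\Delta\|\ \ge\ \tfrac12\bigl(\langle(A+\Delta)y,y\rangle-\langle(A+\Delta)z,z\rangle\bigr)\ =\ \alpha+\tfrac12\bigl(\langle\Delta y,y\rangle-\langle\Delta z,z\rangle\bigr)\ \ge\ \alpha,$$
which is exactly the averaging argument you attempted and abandoned --- it fails for $\Delta$ but succeeds for $-\Delta$. With that correction (and after pruning the several false starts in that paragraph), your cycle $(1)\Rightarrow(2)\Rightarrow(3)\Rightarrow(1)$ is a valid and somewhat more economical alternative to the paper's proof.
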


\begin{proof}

(2) $\Rightarrow$ (1) Let $C\in \kh$ and $D_1\in \D(\kh)$. If there exists $X \in \bb_1(\hh)^h$ which fulfills the properties in \textit{2}, then:
$$\left\|C+D_1\right\|=\dfrac{\text{tr}(X(C+D_1))}{\left\|X\right\|_1}=\text{tr}\left(\dfrac{X}{\left\|X\right\|_1}C\right)\leq \sup_{Y\in \mathcal{N}}\ \left|\text{tr}(YC)\right|=\inf_{D\in \D(\kh)}\ \left\|C+D\right\|,$$
where the last equality holds for the Banach Duality Formula (see Proposition \ref{duality}). Then, $C+D_1$ is minimal.

(1) $\Rightarrow$ (2) Without loss of generality, we can suppose that $\left\|C+D_1\right\|=1$.
The proof of this part follows the same techniques used in Theorem 2 in \cite{varela} for matrices and we include it for the sake of completeness. For Banach duality formula there exists $X\in \bb_1(H)^h$ such that
$$\left\langle Xe_i,e_i\right\rangle=0\ ,\ \forall i\in \mathbb{N}\ ,\ \left\|X\right\|_1=1\ ,\ \text{tr}(X(C+D_1))=\text{tr}(XC)=1.$$
Let us prove that $X(C+D_1)=(C+D_1)X$. Since $C+D_1$ is minimal Proposition \ref{espectro centrado} implies that $-1,1\in \sigma(C+D_1)$. Consider the spectral projections $E_+$, $E_-$ and $E_3=I-E_+-E_-$. The operators $C+D_1$ and $X$ can be written matricially, in therms of the orthogonal decomposition $\hh=R(E_+)\oplus R(E_-)\oplus R(E_3)$, as
$$C+D_1=\begin{pmatrix}
I&0&0\\
0&-I&0\\
0&0&(C+D_1)_{3,3}\\
\end{pmatrix} \ {\rm and}\ X=\begin{pmatrix}
X_{1,1}&X_{1,2}&X_{1,3}\\
X_{2,1}&X_{2,2}&X_{2,3}\\
X_{3,1}&X_{3,2}&X_{3,3}\\
\end{pmatrix}.$$

It is enough to prove that $X_{1,2}=X_{1,3}=X_{2,3}=X_{3,3}=0$. To this end, if we consider Theorem 1.19 in \cite{BarrySimon}, the following inequalities hold

$$\left\|\begin{pmatrix}
X_{1,1}&X_{1,2}\\
X_{2,1}&X_{2,2}
\end{pmatrix}\right\|_1+\left\|X_{3,3}\right\|_1\leq \left\|X\right\|_1$$
and
$$\left\|X_{1,1}\right\|_1+\left\|X_{2,2}\right\|_1\leq \left\|\begin{pmatrix}
X_{1,1}&X_{1,2}\\
X_{2,1}&X_{2,2}
\end{pmatrix}\right\|_1.$$

Suppose that $\left\|X_{3,3}\right\|_1\neq 0$, then
$$1=\text{tr}(X(C+D_1))=\text{tr}(X_{1,1})-\text{tr}(X_{2,2})+\text{tr}(X_{3,3}(C+D_1)_{3,3})$$
$$<\left\|X_{1,1}\right\|_1+\left\|X_{2,2}\right\|_1+\left\|X_{3,3}\right\|_1\leq \left\|\begin{pmatrix}
X_{1,1}&X_{1,2}\\
X_{2,1}&X_{2,2}
\end{pmatrix}\right\|_1+\left\|X_{3,3}\right\|_1\leq \left\|X\right\|_1\leq 1,$$
which is a contradiction. Then, $X_{3,3}=0$.

It also follows that
$$\left.
\begin{array}{c l r l}
\text{tr}(X_{1,1})=\left\|X_{1,1}\right\|_1\\
\text{tr}(-X_{2,2})=\left\|-X_{2,2}\right\|_1 \\
\end{array}
\right\}\Rightarrow X_{1,1}\geq 0\ \wedge -X_{2,2}\geq 0.$$

On the other hand,
$$1=\text{tr}(X(C+D_1))=\left\|X_{1,1}\right\|_1+\left\|-X_{2,2}\right\|_1\leq \left\|X(C+D_1)\right\|_1\leq \left\|X\right\|_1\left\|C+D_1\right\|\leq 1.$$
Therefore,
$$\text{tr}(X(C+D_1))=\left\|X(C+D_1)\right\|_1.$$
Then $X(C+D_1)\geq 0$, which implies that
$$\left\{
\begin{array}{c l r l}
X_{3,1}(C+D_1)_{3,3}=X_{1,3}^*(C+D_1)_{3,3}=X_{3,1}\ \Leftrightarrow X_{3,1}=X_{1,3}^*=0\\
X_{3,2}(C+D_1)_{3,3}=X_{2,3}^*(C+D_1)_{3,3}=X_{3,2}\ \Leftrightarrow X_{3,2}=X_{2,3}^*=0\\
\end{array}
\right..$$

Analogously, we can deduce that 
$$\text{tr}\begin{pmatrix}
X_{1,1}&X_{1,2}\\
-X_{2,1}&-X_{2,2}\\
\end{pmatrix}=\left\|\begin{pmatrix}
X_{1,1}&X_{1,2}\\
-X_{2,1}&-X_{2,2}\\
\end{pmatrix}\right\|_1.$$
Then $\begin{pmatrix}
X_{1,1}&X_{1,2}\\
-X_{2,1}&-X_{2,2}\\
\end{pmatrix}\geq 0$ and
$-X_{2,1}=X_{1,2}^*=X_{2,1}=0.$
Therefore, 
$$X=\begin{pmatrix}
X_{1,1}&0&0\\
0&X_{2,2}&0\\
0&0&0\\
\end{pmatrix}$$

and this operator commutes with $C+D_1$. Also, 
$$X^+=E_+X_{1,1}E_+\ \Longrightarrow E_+X^+=X^+\ {\rm and}\ X^-=E_-X_{2,3}E_-\ \Longrightarrow E_-X^-=X^-.$$

(2) $\Rightarrow$ (3) Let $X\in \bb_1(\hh)^h$, $X\neq 0$ such that $\text{diag}(X)=0$, $\text{tr}(CX)=\left\|X\right\|_1$ and $E_+X^+=X^+\ ,\ E_-X^-=X^-$. Let $D\in \D(\kh)$ and define numbers $m$ and $M$ as
\begin{equation}
m=\min_{y\in R(E_+)}\dfrac{\left\langle Dy,y\right\rangle}{\left\|y\right\|^2}\ \ ,\ \ M=\max_{z\in R(E_-)}\dfrac{\left\langle Dz,z\right\rangle}{\left\|z\right\|^2}. \label{minimax}
\end{equation}

Observe that $ran(E_+),ran(E_-)<\infty$, so the minimum and maximum, respectively, are always attained. We claim that 
\begin{equation}
\text{tr}\left(\dfrac{X^+}{\left\|X^+\right\|_1}D\right)\geq m.\label{desigualdad}
\end{equation}

In order to prove it observe that $X^+=E_+X^+$ and note that 
$$\text{tr}\left(\dfrac{X^+}{\left\|X^+\right\|_1}D\right)=\text{tr}\left(\dfrac{E_+X^+E_+}{\left\|X^+\right\|_1}D\right)=\text{tr}\left(\dfrac{X^+}{\left\|X^+\right\|_1}E_+DE_+\right).$$
Therefore, inequality (\ref{desigualdad}) is equivalent to
$$\text{tr}\left[\dfrac{X^+}{\left\|X^+\right\|_1}\left(E_+DE_+-mE_+\right)\right]\geq 0,$$
since $\dfrac{X^+}{\left\|X^+\right\|_1}\geq 0$. Then, if we prove that $E_+DE_+-mE_+\geq 0$ we obtain (\ref{desigualdad}). Let $h\in \hh$: 

$$\left\langle E_+DE_+h,h\right\rangle=\left\langle DE_+h,E_+h\right\rangle=\left\langle Dy,y\right\rangle\geq m\left\|y\right\|^2,$$
with $E_+h=y\in R(E_+)$. Then, $\underbrace{\left\langle Dy,y\right\rangle}_{<\infty}-\underbrace{m\left\langle y,y\right\rangle}_{<\infty}\geq 0$, for all $y\in R(E_+)$. Finally, since $y=E_+h$, we have
$$\left\langle \left(DE_+-mE_+\right)h,E_+h\right\rangle\geq 0\ \Leftrightarrow \left\langle \left(E_+DE_+-mE_+\right)h,h\right\rangle\geq 0.$$
Analogously, it can be proved that $\text{tr}\left(\dfrac{X^-}{\left\|X^-\right\|_1}D\right)\leq M$.

On the other hand, the condition $\text{diag}(X)=0$ with $X\neq 0$ forces that $\text{diag}(X^+)=\text{diag}(X^-)\neq 0$, and since $X^+,X^-\geq 0$ we have 
$$\left\|X^+\right\|_1=\left\|\text{diag}(X^+)\right\|_1=\left\|\text{diag}(X^-)\right\|_1=\left\|X^-\right\|_1$$
and 
$$\text{tr}(X^+D)=\text{tr}(X^-D).$$
 
Therefore, there exist $y_0\in R(E_+)$ and $z_0\in R(E_-)$ such that $\left\|y_0\right\|=\left\|z_0\right\|=1$ and
$$\left\langle Dy_0,y_0\right\rangle=m\leq \text{tr}\left(\dfrac{X^+}{\left\|X^+\right\|_1}D\right)=\text{tr}\left(\dfrac{X^-}{\left\|X^-\right\|_1}D\right)\leq M=\left\langle Dz_0,z_0\right\rangle.$$

(3) $\Rightarrow$ (2) For this part we follow the main ideas used in the proof of Theorem 2 in \cite{varela}: take the function $\Phi(X)=\text{diag}(X)$ defined in section \ref{preliminares} and the following sets
$$\mathcal{A}=\left\{Y\in \bb_1(\hh)^h:E_+Y=Y\geq 0\ ,\text{tr}(Y)=1\right\}\ {\rm and}\ \mathcal{B}=\left\{Z\in \bb_1(\hh)^h:E_-Z=Z\geq 0\ ,\ \text{tr}(Z)=1\right\}.$$
Since $ran(E_+)<\infty$ (and $ran(E_-)<\infty$), every $Y\in \mathcal{A}$ (and every $Z\in \mathcal{B}$) is an Hermitian operator between finite fixed dimensional spaces. Then, all norms restricted to those spaces are equivalent. Thus, we can consider that $\Phi(\mathcal{A})$ and $\Phi(\mathcal{B})$ are compact subsets of $l^2(\R)$ for every norm (and of course, they are convex also).

Assume the non existence of $X$ satisfying (2). This implies that $\Phi(\mathcal{A})\cap \Phi(\mathcal{B})=\emptyset$. Since $\Phi(\mathcal{A})$ and $\Phi(\mathcal{B})$ are compact and convex sets of $l^2(\R)$ considering the euclidean norm, there exist $a,b\in \R$ and a functional $\rho$ defined for every $x\in \hh$ such that $\rho(x)=\sum_{i=1}^{\infty}x_id_i$, with $d=(d_i)_{i\in \N}\in c_0$, such that
$$\rho(y)\geq a>b\geq \rho(z),$$
for each $y\in \Phi(\mathcal{A})$ and $z\in \Phi(\mathcal{B})$. Then 
$$\left\langle \Phi(Y),d\right\rangle\geq a>b\geq \left\langle \Phi(Z),d\right\rangle\ \Rightarrow \min_{Y\in \mathcal{A}}\left\langle \Phi(Y),d\right\rangle> \max_{Z\in \mathcal{B}}\left\langle \Phi(Z),d\right\rangle,$$
and this can not occur  because if $D=\text{Diag}(d)\in \D(\kh)$, then 
$$\min_{Y\in \mathcal{A}}\left\langle \Phi(Y),d\right\rangle=m\ {\rm and}\ \max_{Z\in \mathcal{B}}\left\langle \Phi(Z),d\right\rangle=M,$$
with $m$ and $M$ defined in (\ref{minimax}). Therefore $M<m$ and this fact contradicts condition (3).

\end{proof}

\begin{rem}
The operator $X$ in statement (2) of Theorem \ref{teoprincipal} has finite rank. Moreover, $X$ can be described as a finite diagonal block operator in the base of eigenvectors of the minimal compact operator $C+D_1$.
\end{rem}

\begin{rem}
Let $C\in \kh$ and suppose that there exists an operator $X$ which satisfies the conditions of statement (\textit{2}) of Theorem \ref{teoprincipal}.  Then, we can define $\Psi:\kh\to\R$, given by $\Psi(\cdot)=\text{tr}(X\cdot)$, such that 
\begin{enumerate}
\item $\left\|\Psi\right\|\leq 1,$
\item $\Psi(C)=\text{tr}(XC)=\left\|\left[C\right]\right\|,$
\item $\Psi(D)=0\ \forall \ D\in \D(\bh),$
\end{enumerate}
and $\Psi$ acts as a functional witness of the minimality of $C$ (see \cite{rieffel}).
\end{rem}

If we take $v,w\in \hh$, we can write $v=\sum_{i=1}^{\infty}v^ie_i$ and $w=\sum_{i=1}^{\infty}w^ie_i$ with $v^i,w^i\in \C$ for all $i\in \N$. Then, we denote with $v\circ w$ the vector in $\hh$ defined by
$$v\circ w=\left(v^1w^1,v^2w^2,v^3w^3,...\right)\in \hh.$$

The proof of the following corollary is the analogue of that of Corollary 3 in \cite{varela}, considering the special treatment for compact operators instead of matrices.

\begin{cor}
Let $C\in \kh$, $C\neq 0$, such that $\lambda_{max}(C)+\lambda_{min}(C)=0$. Then, the following statements are equivalent:
\be
\item $C$ is minimal (as defined in the Section \ref{preliminares}). 
\item There exist $\left\{v_i\right\}_{i=1}^{r}\subset ran(E_+)$ and $\left\{v_j\right\}_{j=r+1}^{r+s}\subset ran(E_-)$, orthonormal sets such that
$$\text{co}\left(\left\{v_i\circ\overline{v_i}\right\}_{i=1}^{r}\right)\cap \text{co}\left(\left\{v_j\circ\overline{v_j}\right\}_{i=r+1}^{r+s}\right)\neq 0.$$ 
\ee
\end{cor}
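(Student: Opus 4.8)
The plan is to derive this corollary directly from the equivalence (1)$\Leftrightarrow$(2) in Theorem~\ref{teoprincipal}, specialized to the case $D_1=0$ (which is allowed since $0\in\D(\kh)$, and the hypothesis $\lambda_{max}(C)+\lambda_{min}(C)=0$ gives the balanced-spectrum condition appearing in that theorem). The heart of the matter is to translate the existence of the trace-class operator $X$ of statement~(2) into a finite convex-geometry statement about the vectors $v_i\circ\overline{v_i}$. First I would recall from the Remark following Theorem~\ref{teoprincipal} that such an $X$ necessarily has finite rank and is block-diagonal in the eigenbasis of $C$, with $X^+$ supported on $R(E_+)$ and $X^-$ on $R(E_-)$. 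Then I would spectrally decompose the two positive finite-rank pieces: write $X^+=\sum_{i=1}^{r}\lambda_i\, v_i\otimes \overline{v_i}$ with $\{v_i\}_{i=1}^r\subset R(E_+)$ orthonormal, $\lambda_i>0$, and $X^-=\sum_{j=r+1}^{r+s}\mu_j\, v_j\otimes\overline{v_j}$ with $\{v_j\}_{j=r+1}^{r+s}\subset R(E_-)$ orthonormal, $\mu_j>0$.

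The key computation is the identification $\text{diag}(v\otimes\overline v)=v\circ\overline v$ for a unit vector $v$: indeed $\langle (v\otimes\overline v)e_k,e_k\rangle=|v^k|^2=(v\circ\overline v)^k$. Hence $\text{diag}(X^+)=\sum_{i=1}^r\lambda_i\,(v_i\circ\overline{v_i})$ and $\text{diag}(X^-)=\sum_{j=r+1}^{r+s}\mu_j\,(v_j\circ\overline{v_j})$. Now I would invoke the conditions on $X$ from statement~(2): the condition $\text{diag}(X)=0$ together with $X=X^+-X^-$ forces $\text{diag}(X^+)=\text{diag}(X^-)$, and (as already observed inside the proof of Theorem~\ref{teoprincipal}) $\|X^+\|_1=\|X^-\|_1$, i.e. $\sum_i\lambda_i=\sum_j\mu_j=:\alpha>0$. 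Dividing through by $\alpha$ exhibits a common point
$$
\frac{1}{\alpha}\sum_{i=1}^r\lambda_i\,(v_i\circ\overline{v_i})=\frac{1}{\alpha}\sum_{j=r+1}^{r+s}\mu_j\,(v_j\circ\overline{v_j})
$$
which lies in $\text{co}(\{v_i\circ\overline{v_i}\}_{i=1}^r)\cap\text{co}(\{v_j\circ\overline{v_j}\}_{j=r+1}^{r+s})$, and this point is nonzero because each $v_i\circ\overline{v_i}$ has nonnegative entries summing to $\|v_i\|^2=1$. This gives (1)$\Rightarrow$(2) of the corollary.

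For the converse, I would run this construction backwards: given orthonormal $\{v_i\}_{i=1}^r\subset R(E_+)$, $\{v_j\}_{j=r+1}^{r+s}\subset R(E_-)$ and convex coefficients $t_i\ge 0$, $\sum t_i=1$ and $s_j\ge0$, $\sum s_j=1$ with $\sum_i t_i\,(v_i\circ\overline{v_i})=\sum_j s_j\,(v_j\circ\overline{v_j})\neq 0$, set $X^+=\sum_i t_i\,v_i\otimes\overline{v_i}$, $X^-=\sum_j s_j\,v_j\otimes\overline{v_j}$ and $X=X^+-X^-$. Then $X\in\bb_1(\hh)^h$, $X\neq 0$ (since if $X=0$ then $X^+=X^-$ would have orthogonal ranges yet be equal, hence zero, contradicting $\text{diag}(X^+)\neq 0$), $\text{diag}(X)=0$ by construction, $E_+X^+=X^+$ and $E_-X^-=X^-$ automatically, and the norming condition $|\text{tr}(XC)|=\|C\|\|X\|_1$ follows because $C=\lambda_{max}(C)E_++\lambda_{min}(C)E_-+C_3$ with $\lambda_{max}(C)=-\lambda_{min}(C)=\|C\|$, so $\text{tr}(XC)=\|C\|(\text{tr}(X^+)+\text{tr}(X^-))=\|C\|\,\|X\|_1$, using $\|X\|_1=\|X^+\|_1+\|X^-\|_1=2$. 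Thus statement~(2) of Theorem~\ref{teoprincipal} holds and $C$ is minimal. The only mild subtlety, which I would address explicitly, is the bookkeeping that $\|X\|_1=\|X^+\|_1+\|X^-\|_1$ when $X^+$ and $X^-$ have orthogonal support (so there is no cancellation in the trace norm), and the verification that the normalization $\text{tr}(X^+)=\text{tr}(X^-)$ is exactly what makes $\text{diag}(X)=0$ consistent; these are routine but deserve a sentence. I expect the main obstacle to be purely expository: making precise the passage between the operator-theoretic language of Theorem~\ref{teoprincipal}(2) and the finite-dimensional convex-hull language here, i.e.\ that $\text{diag}$ carries rank-one projections onto unit vectors to the points $v\circ\overline v$ and convex combinations to convex combinations.
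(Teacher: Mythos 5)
Your proof is correct and follows exactly the route the paper intends: the paper omits the argument, remarking only that it is the analogue of Corollary 3 in \cite{varela}, and that analogue is precisely your translation of statement (2) of Theorem \ref{teoprincipal} (with $D_1=0$) into convex-hull language via the spectral decompositions of $X^{+}$ and $X^{-}$ and the identity $\mathrm{diag}(v\otimes\overline{v})=v\circ\overline{v}$. The points you flag as needing a sentence (orthogonality of the supports of $X^{+}$ and $X^{-}$, hence $\left\|X\right\|_1=\left\|X^{+}\right\|_1+\left\|X^{-}\right\|_1$, and $\mathrm{tr}(X^{+})=\mathrm{tr}(X^{-})$ being equivalent to $\mathrm{diag}(X)=0$ on the hulls) are indeed the only details to write out, and you handle them correctly.
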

Here $\text{co}\left(\left\{w_k\right\}_{k=n_0}^{n_1}\right)$ denotes the convex hull of the space generated by the finite family of vectors $\left\{w_k\right\}_{k=n_0}^{n_1}\subset\hh$, and if $w_k= ({w_k^1},{w_k^2},{w_k^3},...)$ in the canonical or fixed base chosen in $H$ (see Section \ref{preliminares}), then we denote with 
$\overline{w_k}= (\overline{w_k^1},\overline{w_k^2},\overline{w_k^3},...)\in\hh$.

\end{document}